\numberwithin{equation}{section}
\newtheorem{theorem}{Theorem}[section]
\newtheorem{lemma}[theorem]{Lemma}
\newtheorem{corollary}[theorem]{Corollary}
\theoremstyle{definition}
\newtheorem{remark}[theorem]{Remark}
\newtheorem{definition}[theorem]{Definition}
\newtheorem*{MT}{Theorem \ref{t:mt}}
\DeclareMathOperator{\diam}{diam}
\DeclareMathOperator{\dist}{dist}
\DeclareMathOperator{\cl}{cl}
\title{Generalized Hausdorff measure for generic compact sets}
\author{Rich\'ard Balka}
\address{Alfr\'ed R\'enyi Institute of Mathematics, Hungarian Academy of Sciences,
PO Box 127, 1364 Budapest, Hungary}
\email{balka.richard@renyi.mta.hu}
\thanks{We gratefully acknowledge the support of the
Hungarian Scientific Foundation grant no.~72655. The second author was also supported by the EPSRC grant EP/G050678/1.}
\author{Andr\'as M\'ath\'e}
\address{Mathematics Institute, University of Warwick,
Coventry, CV4~7AL, United Kingdom}
\email{A.Mathe@warwick.ac.uk}
\date{}
\begin{document}

\begin{abstract} Let $X$ be a Polish space. We prove that the generic compact set
$K\subseteq X$ (in the sense of Baire category) is either finite or there is a continuous gauge
function $h$ such that
$0<\mathcal{H}^{h}(K)<\infty$, where $\mathcal{H}^h$ denotes the $h$-Hausdorff measure.
This answers a question of C.~Cabrelli, U.~B.~Darji, and U.~M.~Molter. Moreover, for every weak
contraction $f\colon K\to X$ we have $\mathcal{H}^{h} \left(K\cap f(K)\right)=0$.
This is a measure theoretic analogue of a result of M.~Elekes.
\end{abstract}

\keywords{Contraction, dimension function, exact Hausdorff dimension, gauge, generic compact set, Polish space, typical}

\subjclass[2010]{28A78}

\maketitle

\section{Introduction}

Hausdorff dimension is one of the most important concepts to measure the size of a metric space, but
there are some cases when a finer notion of dimension is needed. An important
example is the trail of the $n$-dimensional $(n\geq 2)$ Brownian motion defined on $[0,1]$.
It has Hausdorff dimension $2$ almost surely, but its $\mathcal{H}^2$ measure is $0$ with probability $1$. It is well-known
that there is a gauge function $h$ such that the $h$-Hausdorff measure of the trail is positive and finite almost surely, where
$h(x)=x^2 \log \log (1/x)$ if  $n\geq 3$ and $h(x)=x^2 \log (1/x) \log \log \log (1/x)$ if $n=2$.
Thus the exact dimension is logarithmically smaller than $2$.

R. O. Davies \cite{D} constructed a Cantor set $K\subseteq \mathbb{R}$ that is either null or non-$\sigma$-finite for every translation
invariant Borel measure on $\mathbb{R}$. This implies that there is no gauge function $h$ such that $0<\mathcal{H}^{h}(K)<\infty$,
where $\mathcal{H}^h$ denotes the $h$-Hausdorff measure. C.~Cabrelli, U.~B.~Darji, and U.~M.~Molter \cite{C}
dealt with the problem that for `how many' compact sets $K\subseteq \mathbb{R}$ exist a translation invariant
Borel measure $\mu$ or a gauge function $h$ such that $0<\mu(K)<\infty$ or $0<\mathcal{H}^{h}(K)<\infty$, respectively.
They proved that the generic compact set $K\subseteq \mathbb{R}$
(see Definition \ref{d:generic})
admits a translation invariant Borel measure $\mu$ such that $0<\mu(K)<\infty$.
They defined a compact set $K\subseteq \mathbb{R}$ to be \emph{$\mathcal{H}$-visible}
if there is a gauge function $h$ such that $0<\mathcal{H}^h(K)<\infty$.
They showed that the set of $\mathcal{H}$-visible compact sets is dense in the space of all non-empty compact subsets of $\mathbb{R}$ endowed with the Hausdorff metric.
They posed the problem whether the generic compact set $K\subseteq \mathbb{R}$ is $\mathcal{H}$-visible.
We answer this question affirmatively by the following more general result.
%%%%
\begin{theorem} \label{t:gg} Let $X$ be a Polish space. The generic compact set $K\subseteq X$ is either finite or there is a continuous gauge function
$h$ such that $0<\mathcal{H}^{h}(K)<\infty$.
\end{theorem}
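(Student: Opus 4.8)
The plan is to work in the hyperspace $\mathcal{K}(X)$ of non-empty compact subsets of $X$ with the Hausdorff metric, which is again Polish, and to exhibit a comeager set $\mathcal{G}\subseteq\mathcal{K}(X)$ every member of which is finite or admits a continuous gauge $h$ with $0<\mathcal{H}^h(K)<\infty$; equivalently, I would show that the set of $K$ that are simultaneously infinite and not $\mathcal{H}$-visible is meager. The natural tool is the Banach--Mazur game on $\mathcal{K}(X)$: a basic neighbourhood of a compact set is prescribed by a finite family of small open balls that $K$ must meet and be contained in, so playing the game amounts to building $K$ as a decreasing intersection of finite unions of balls, i.e.\ as a Moran-type construction. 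I would design a strategy for the player building the ``good'' limit so that, at stage $n$, each ball of the current cover is replaced by finitely many sub-balls of a common radius $r_{n+1}\ll r_n$ that are pairwise $s_{n+1}$-separated with $s_{n+1}\gg r_{n+1}$: whenever a ball genuinely splits (it meets $X$ in more than essentially one point at small scales) the strategy splits it into at least two such sub-balls, and otherwise keeps a single shrinking sub-ball. The crucial point is that the player controls the \emph{geometry}, forcing separation large compared to diameter at every level --- precisely the regularity a Davies-type non-visible Cantor set lacks. The dichotomy between ``splits forever'' and ``collapses to a point'' is what yields, comeagerly, that $K$ is finite (or countable with a null perfect part, which I must also rule out) versus that the perfect part $C=K\setminus\{\text{isolated points}\}$ is a Cantor set with a homogeneous, well-separated level structure.

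For an infinite $K$ in this good set I would then extract from $C$ a uniform Moran structure: covers by balls $\{B(x^n_i,r_n)\}_{i=1}^{N_n}$, pairwise separated at scale $s_n\gg r_n$, with a common branching number at each level so that $N_{n+1}/N_n$ is constant at level $n$. From this I define the gauge by $h(r_n):=1/N_n$ and extend it to a continuous, strictly increasing function on $(0,\diam X]$ by interpolation in $\log$-scale, with $h(0+)=0$ since $N_n\to\infty$; continuity and monotonicity come for free because $r_n\downarrow 0$ and $1/N_n\downarrow 0$. I also put the uniform mass distribution $\mu$ on $C$ assigning mass $1/N_n$ to each level-$n$ ball, which is a genuine Borel probability measure precisely because the branching is homogeneous.

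The two inequalities are then the standard Moran-set estimates. For $\mathcal{H}^h(K)<\infty$ I use the explicit covers: the $h$-content of the level-$n$ cover is $N_n\cdot h(r_n)=1$ (up to a constant absorbing the diameter $2r_n$ versus $r_n$), and letting $n\to\infty$ gives $\mathcal{H}^h(C)\le C_0<\infty$; since the isolated points of $K$ form a countable, hence $\mathcal{H}^h$-null, set, $\mathcal{H}^h(K)=\mathcal{H}^h(C)<\infty$. For $\mathcal{H}^h(K)>0$ I apply the mass distribution principle: using the $s_n$-separation one checks that any set $U$ with $\diam U\in[r_{n+1},r_n)$ meets only boundedly many level-$n$ balls, whence $\mu(U)\le C_1\,h(\diam U)$ with $C_1$ independent of $U$, so $\mathcal{H}^h(K)\ge\mu(C)/C_1=1/C_1>0$.

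The main obstacle is the first step: arranging, by a category/strategy argument alone, a level structure that is simultaneously \emph{homogeneous} in its branching (so that the uniform measure $\mu$ exists without atoms on the isolated points) and \emph{well-separated} at each scale (so that the Frostman bound holds), while correctly detecting the finite case and excluding the countably-infinite case. The tension is that the branching actually available inside a ball is dictated by $X$, not by the player, so the strategy must reduce every genuinely splitting ball to a fixed branching number (for instance, always split into exactly two) and it must be verified that the resulting set of outcomes is comeager and that its infinite members really carry the claimed homogeneous, separated geometry; checking that a single continuous gauge serves the upper and lower bounds uniformly across all scales is the delicate part.
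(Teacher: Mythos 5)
Your overall route is the same as the paper's: a Baire-category refinement argument in $(\mathcal{K}(X),d_H)$ forcing a strongly separated Moran-type level structure on the generic compact set (your Banach--Mazur strategy is equivalent to the paper's construction of dense open sets from balanced schemes, Lemmas~\ref{l:balanced} and~\ref{cor}), followed by a gauge $h\approx 1/N_n$ at the $n$th scale, with the upper bound from the canonical covers and the lower bound from the mass distribution principle (this second half the paper quotes from \cite{B}). However, two of your design decisions contain genuine gaps. First, you cannot enforce bounded branching (``always split into exactly two''). The branching is dictated not by $X$, as you say, but by the opponent's moves in the category argument: a basic open set in $\mathcal{K}(X)$ can be a small Hausdorff ball around a finite set having $M$ well-separated points inside each current cell, and every compact set in that neighbourhood must meet all $M$ regions, so the next level is forced to have at least $M$ children per cell, with $M$ arbitrarily large along a generic run. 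What \emph{can} be enforced --- and what the paper's Lemma~\ref{l:balanced} does enforce, using density of finite sets and perfectness of the ambient space --- is \emph{equal} branching across cells at each level, so the homogeneity you need for the uniform measure $\mu$ is not the obstacle; unboundedness of the ratios $N_{n+1}/N_n$ is.

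Second, with unbounded ratios your gauge (log-scale interpolation of $h(r_n)=1/N_n$) does not satisfy the Frostman estimate as you argue it. Your bound ``$U$ meets boundedly many level-$n$ balls'' gives only $\mu(U)\le 1/N_n$, and this is the best available for every $U$ whose diameter lies between the level-$(n+1)$ separation $s_{n+1}$ and $r_n$ (such $U$ can meet \emph{many} level-$(n+1)$ balls). Hence you need $h(s_{n+1})\ge c/N_n$, whereas log-interpolation gives, e.g.\ for $s_{n+1}=\sqrt{r_nr_{n+1}}$, the value $h(s_{n+1})\approx (N_nN_{n+1})^{-1/2}\ll 1/N_n$ when $N_{n+1}/N_n\to\infty$; so the constant $C_1$ cannot be uniform. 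The repair uses the order of moves: the forced branching $N_{n+1}$ and the separation of the forced points are known \emph{before} you choose the new radius $r_{n+1}$, so you may take $r_{n+1}$ so small that $h$ can stay essentially equal to $1/N_n$ on all scales from just above $r_{n+1}$ up to $r_n$ and drop to $1/N_{n+1}$ only just below --- a staircase-shaped gauge rather than a log-linear one. This interplay is exactly what the separation clause $\dist>2b_k$, $\diam\le b_k$, together with the freedom to choose $b_{k+1}$ after the branching is fixed, encodes in Definition~\ref{def:balancedscheme} and in the gauge of \cite{B}. A third, smaller gap: your treatment of the finite versus countably infinite dichotomy and of the isolated points of $K$ is left as a ``must also rule out''; the paper settles it cleanly by applying the Cantor--Bendixson decomposition to $X$ (not to $K$) and showing via the open map $R(K)=K\cap X^{*}$ that generically $K$ is a balanced subset of the perfect kernel together with finitely many isolated points of $X$.
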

%%%%%
We remark here that for every fixed gauge function $h$
%with $\lim_{x\to 0+} h(x)=0$,
the generic compact set $K\subseteq X$ has zero $\mathcal{H}^h$ measure.

If $X$ is a perfect Polish space then the set of finite compact subsets of $X$ form a meager set in the metric space of all non-empty compact subsets of
$X$ endowed with the Hausdorff metric. Therefore Theorem~\ref{t:gg} implies the following result.

\begin{corollary} Let $X$ be a perfect Polish space. For the generic compact set $K\subseteq X$ there is a
continuous gauge function $h$ such that $0<\mathcal{H}^{h}(K)<\infty$.
\end{corollary}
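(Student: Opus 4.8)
The plan is to deduce the corollary directly from Theorem~\ref{t:gg} by separating, at the level of Baire category, the two alternatives it offers: being finite, or carrying a continuous gauge with $0<\mathcal{H}^h(K)<\infty$. Throughout, let $\mathcal{K}(X)$ denote the space of non-empty compact subsets of $X$ equipped with the Hausdorff metric; since $X$ is Polish, so is $\mathcal{K}(X)$, hence it is a Baire space and statements about the generic compact set are meaningful.

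First I would record the comeager set furnished by Theorem~\ref{t:gg}. Set
\[
\mathcal{G}=\{K\in\mathcal{K}(X): K \text{ is finite, or there is a continuous gauge } h \text{ with } 0<\mathcal{H}^h(K)<\infty\}.
\]
By Theorem~\ref{t:gg}, the set $\mathcal{G}$ is comeager in $\mathcal{K}(X)$, that is, its complement is meager.

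Second, I would invoke the topological fact recorded just above the corollary: when $X$ is perfect, the family $\mathcal{F}$ of finite compact subsets of $X$ is meager in $\mathcal{K}(X)$. (Concretely one writes $\mathcal{F}=\bigcup_{n}\mathcal{F}_n$, where $\mathcal{F}_n$ is the set of compact sets of cardinality at most $n$; each $\mathcal{F}_n$ is closed, and perfectness of $X$ allows one to perturb any finite set, within arbitrarily small Hausdorff distance, to a set with strictly more points, so $\mathcal{F}_n$ has empty interior and is nowhere dense.)

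Finally I would intersect. Consider $\mathcal{V}=\mathcal{G}\setminus\mathcal{F}$. Its complement $(\mathcal{K}(X)\setminus\mathcal{G})\cup\mathcal{F}$ is the union of two meager sets, hence meager, so $\mathcal{V}$ is comeager. Every $K\in\mathcal{V}$ lies in $\mathcal{G}$ and is infinite, so the first alternative defining $\mathcal{G}$ is excluded and there is a continuous gauge $h$ with $0<\mathcal{H}^h(K)<\infty$. Thus the generic compact set $K\subseteq X$ carries such a gauge, as claimed. I expect no genuine obstacle: all the analytic content is already packaged in Theorem~\ref{t:gg}, and the only additional ingredient is the elementary category computation, whose single nontrivial point --- the meagerness of $\mathcal{F}$ --- is exactly where perfectness enters and is already isolated in the excerpt.
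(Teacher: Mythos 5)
Your proof is correct and follows exactly the paper's approach: the paper likewise deduces the corollary from Theorem~\ref{t:gg} by noting that in a perfect Polish space the finite compact sets form a meager subset of $\mathcal{K}(X)$, so the generic compact set falls into the second alternative. Your fleshing out of the meagerness claim (writing the finite sets as a countable union of closed, nowhere dense families $\mathcal{F}_n$) is a detail the paper leaves implicit, but the argument is the same.
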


M.~Elekes \cite{E} studied metric spaces $X$ which are not complete but possess the Banach Fixed Point Theorem, that is,
every contraction $f\colon X\to X$ has a fixed point. He proved the following theorem which is interesting in its own right.

\begin{theorem}[M. Elekes] \label{t:E} For the generic compact set $K\subseteq \mathbb{R}$ for any contraction $f\colon K\to \mathbb{R}$
the set $f(K)$ does not contain a non-empty relatively open subset of $K$.
\end{theorem}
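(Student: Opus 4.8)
The plan is to prove the equivalent statement that the family of \emph{bad} sets
\[
B=\{K\in\mathcal{K}(\mathbb{R}) : \text{some contraction } f\colon K\to\mathbb{R} \text{ has } f(K)\supseteq K\cap U \text{ for some open } U \text{ with } K\cap U\neq\emptyset\}
\]
is meager in the hyperspace $\mathcal{K}(\mathbb{R})$ of non-empty compact subsets of $\mathbb{R}$ under the Hausdorff metric. Since every contraction is $(1-1/n)$-Lipschitz for some integer $n\ge 2$, and since every non-empty relatively open subset of $K$ contains a piece $K\cap(a,b)$ for rationals $a<a'<b'<b$ with $(a,b)\subseteq U$ and $K\cap[a',b']\neq\emptyset$, I would write $B=\bigcup B_{a,b,a',b',n}$ over the countable index set, where
\[
B_{a,b,a',b',n}=\{K : K\cap[a',b']\neq\emptyset \text{ and } \exists\,(1-1/n)\text{-Lipschitz } f\colon K\to\mathbb{R} \text{ with } K\cap(a,b)\subseteq f(K)\}.
\]
It then suffices to show each $B_{a,b,a',b',n}$ is nowhere dense.

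First I would verify that $B_{a,b,a',b',n}$ is closed. Given $K_j\to K$ with witnesses $f_j$, all the sets lie in a common interval $[-M,M]$; extending each $f_j$ by the McShane formula to a $(1-1/n)$-Lipschitz function on $[-M,M]$ and using that $f_j$ attains a value in $(a,b)$ (because $K_j\cap[a',b']\neq\emptyset$ and $[a',b']\subseteq(a,b)$), the extensions become uniformly bounded and equicontinuous, so by Arzel\`a--Ascoli a subsequence converges uniformly to a $(1-1/n)$-Lipschitz limit $f$. The asymmetric choice of intervals is what makes the limit behave: the \emph{closed} certifying interval $[a',b']$ forces $K\cap[a',b']\neq\emptyset$ in the limit, while for $x\in K\cap(a,b)$ the \emph{openness} of $(a,b)$ lets me approximate $x$ by points $x_j\in K_j\cap(a,b)\subseteq f_j(K_j)$, whose preimages converge to some $y\in K$ with $f(y)=x$; hence $K\cap(a,b)\subseteq f(K)$ and $K\in B_{a,b,a',b',n}$.

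It remains to show each closed set has empty interior, i.e.\ its complement is dense. Fix $K_0$ and $\epsilon>0$. If $(a,b)$ stays $\epsilon$-far from $K_0$ then every nearby $K$ has $K\cap[a',b']=\emptyset$ and is automatically good, so assume $(a,b)$ meets the $\epsilon$-neighbourhood of $K_0$. Here I would use the quantitative fact that an $L$-Lipschitz map satisfies $\mathcal{H}^s(f(A))\le L^{s}\mathcal{H}^s(A)$. Choosing a self-similar Cantor set $C$ of some dimension $s\in(0,1)$ with $0<\mathcal{H}^s(C)<\infty$, rescaled into a tiny subinterval $J\subseteq(a,b)$ within $\epsilon$ of $K_0$, and putting $K=C\cup K'$ with $K'$ a finite $\tfrac{\epsilon}{2}$-net of $K_0$, gives $\dist$-control $d_H(K,K_0)<\epsilon$, while $\mathcal{H}^s(K)=\mathcal{H}^s(C)=\mathcal{H}^s(K\cap(a,b))\in(0,\infty)$ because the finite set $K'$ is $\mathcal{H}^s$-null. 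If some $(1-1/n)$-Lipschitz $f$ had $K\cap(a,b)\subseteq f(K)$, then
\[
\mathcal{H}^s(K)=\mathcal{H}^s\big(K\cap(a,b)\big)\le\mathcal{H}^s(f(K))\le\Big(1-\tfrac1n\Big)^{s}\mathcal{H}^s(K)<\mathcal{H}^s(K),
\]
a contradiction; so $K$ is good, proving density.

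Combining these, each $B_{a,b,a',b',n}$ is nowhere dense, $B$ is meager, and the generic $K$ avoids $B$, which is exactly the assertion. The routine parts are the Lipschitz scaling of $\mathcal{H}^s$ and the net construction; the step I expect to be the main obstacle is the closedness of the pieces, that is, controlling the limit of the contractions. The naive formulation of $B_{a,b,a',b',n}$ is in fact \emph{not} closed, because the points of $K_j$ realising the covered piece can escape to the endpoints in the limit. The remedy is precisely the asymmetric pairing of a closed certifying interval with an open covered interval, combined with the Arzel\`a--Ascoli compactness of the extended $(1-1/n)$-Lipschitz maps.
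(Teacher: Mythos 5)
Your proof is correct, but there is nothing in the paper to compare it against line by line: Theorem~\ref{t:E} is stated purely as background and attributed to Elekes's paper \cite{E}; the present paper never proves it, and instead proves the measure-theoretic analogue (Theorem~\ref{t:mt}) via balanced compact sets and the gauge-function theorem of \cite{B}. Your argument stands on its own and is sound: the countable decomposition $B=\bigcup B_{a,b,a',b',n}$ is exact (any contraction is $(1-1/n)$-Lipschitz for some $n$, and any non-empty relatively open piece of $K$ contains some $K\cap(a,b)$ with $K\cap[a',b']\neq\emptyset$); the asymmetric pairing of the closed certifying interval $[a',b']$ with the open covered interval $(a,b)$, combined with McShane extension and Arzel\`a--Ascoli, does make each piece closed (the closed interval survives Hausdorff limits, while openness of $(a,b)$ lets you push points of $K\cap(a,b)$ back into the approximating sets); and the scaling contradiction $\mathcal{H}^s(C)\le(1-\tfrac1n)^s\mathcal{H}^s(C)$ with $0<\mathcal{H}^s(C)<\infty$ correctly gives density of the complement. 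The comparison worth making is with the paper's Main Theorem: your density step depends essentially on the Lipschitz constant being bounded away from $1$, since the uniform factor $(1-1/n)^s<1$ is what produces the strict drop in measure. For \emph{weak} contractions this fails --- the local Lipschitz ratios may tend to $1$, so no power gauge $x^s$ can yield a contradiction --- and that is precisely why the paper needs the balanced-set construction and a tailor-made continuous gauge $h$ with $\mathcal{H}^h(K\cap f(K))=0$. So your route buys brevity and elementarity for genuine contractions on $\mathbb{R}$, while the paper's heavier machinery buys weak contractions and arbitrary Polish spaces; note also that your underlying mechanism (Lipschitz images lose Hausdorff measure) is the same measure-theoretic idea the paper builds on, implemented here with the simplest possible gauges.
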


The first author of the present paper \cite{B} constructed metric spaces $X$ such that every weak contraction
$f\colon X\to X$ is constant, where he used
measure theoretic methods. Based on \cite{B}, we prove the (somewhat stronger) measure theoretic analogue
of Theorem \ref{t:E}.

\begin{MT}[Main Theorem]\label{t:hh}
 Let $X$ be a Polish space. The generic compact set $K\subseteq X$ is either finite or
there is a continuous gauge function $h$ such that $0<\mathcal{H}^{h}(K)<\infty$, and for every weak contraction $f\colon
K\to X$ we have $\mathcal{H}^{h} \left(K\cap f(K)\right)=0$.
\end{MT}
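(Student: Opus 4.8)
The plan is to build on the construction behind Theorem~\ref{t:gg}, which already yields a comeager family of Cantor schemes $K=\bigcap_n\bigcup_{|\sigma|=n}K_\sigma$ in $\mathcal{K}(X)$ together with a continuous gauge $h$ satisfying $0<\mathcal{H}^h(K)<\infty$; write $\mu=\mathcal{H}^h|_K$ and let $N_n$ be the number of level-$n$ pieces, normalized so that $\mu(K_\sigma)=1/N_n$ for $|\sigma|=n$. First I would dispose of the quantifier over the ambient map: since $K\cap f(K)=f\bigl(f^{-1}(K)\bigr)$ and $A:=f^{-1}(K)$ is compact with $g:=f|_A\colon A\to K$ again a weak contraction, it suffices to prove that for the generic $K$ every weak contraction $g\colon A\to K$ with $A\subseteq K$ compact satisfies $\mu\bigl(g(A)\bigr)=0$. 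Because the level-$n$ pieces meeting $g(A)$ form nested covers shrinking to $g(A)$, this is equivalent to the combinatorial statement $\mu\bigl(g(A)\bigr)=\lim_{n}\sum_{\tau\in S_n}\mu(K_\tau)=0$, where $S_n=\{\tau:\ |\tau|=n,\ K_\tau\cap g(A)\neq\emptyset\}$; so the entire task is to force the $\mu$-mass of the pieces hit by $g(A)$ to tend to $0$.

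Next I would record the only regularity of weak contractions available on a compact domain: by the usual Edelstein compactness argument, for every $\eta>0$ the quotient $\dist(g(x),g(y))/\dist(x,y)$ attains a maximum $c_\eta<1$ on the compact set $\{(x,y)\in A\times A:\ \dist(x,y)\ge\eta\}$, so $g$ is a genuine contraction at each fixed scale. In particular $\diam g(K_\sigma\cap A)<d_n$ for $|\sigma|=n$, and a system of representatives of the pieces in $S_n$ pulls back under $g$ to a $g_n$-separated subset of $A$, where $d_n$ is the level-$n$ diameter and $g_n$ the minimal separation of distinct level-$n$ pieces; this bounds the number of hit pieces by a packing number of $K$ at scale $g_n$. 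The point of the construction will be to arrange, generically and simultaneously with the gauge estimate $0<\mathcal{H}^h(K)<\infty$, that these packing/gap constraints propagate to $\sum_{\tau\in S_n}\mu(K_\tau)\to0$.

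The heart of the proof, and the step I expect to be the main obstacle, is to make this hold \emph{uniformly over all} weak contractions $g$, which are uncountable and depend on $K$; worse, near-identity weak contractions have scale-wise ratios $c_\eta\to1$ as $\eta\to0$, so no single-scale size estimate can succeed and one is forced into a self-dissimilarity argument in the spirit of \cite{B}: the image $g(A)$ is a distorted copy of a part of $K$, and for generic $K$ the gap pattern of such a copy is incommensurable with that of $K$ at deeper scales, so $g(A)$ is squeezed into the gaps of $K$. To turn this into a Baire category statement I would first discretize: using the scale-wise contraction $c_{d_n}<1$, approximate each $g$ by its action on the finite net given by the level-$n$ pieces, producing countably many finite ``contraction patterns''. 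For each such pattern and each $\delta>0$ I would then prove that the set of schemes for which some deeper level satisfies $\sum_{\tau\in S_n}\mu(K_\tau)<\delta$ for every $g$ realizing that pattern is \emph{dense and open} in the scheme space: density is exactly the self-dissimilarity input, obtained by using the freedom in placing and separating the next-level pieces to dodge the finitely many constraints imposed by the pattern, while keeping the diameters and masses within the tolerances that guarantee $0<\mathcal{H}^h(K)<\infty$.

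Intersecting these countably many dense open conditions with the comeager set furnished by Theorem~\ref{t:gg} would yield a comeager set of compact $K$ for which simultaneously $0<\mathcal{H}^h(K)<\infty$ and $\mu\bigl(g(A)\bigr)=0$ for every weak contraction $g\colon A\to K$. Combined with the reduction $K\cap f(K)=f\bigl(f^{-1}(K)\bigr)=g(A)$, this gives $\mathcal{H}^h\bigl(K\cap f(K)\bigr)=0$ for every weak contraction $f\colon K\to X$, which together with the finite/gauge dichotomy of Theorem~\ref{t:gg} is precisely the Main Theorem. The delicate points, all concentrated in the third paragraph, are the faithful reduction of arbitrary weak contractions to finitely many patterns via the Edelstein moduli $c_\eta$, and the verification that the gap-dodging extensions can be performed without destroying the two-sided gauge bounds inherited from Theorem~\ref{t:gg}.
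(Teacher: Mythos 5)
Your first two paragraphs are fine: the reduction $K\cap f(K)=g(A)$ with $g=f|_{f^{-1}(K)}$ a weak contraction of a compact $A\subseteq K$ into $K$, and the Edelstein observation that on a compact domain a weak contraction has a strict contraction constant $c_\eta<1$ at each fixed scale $\eta$, are both correct (and appear, in essence, in \cite{B}). But the paper does not attempt what your third paragraph attempts, and for good reason. The paper's route is: define a purely geometric class of compact sets (the \emph{balanced} sets of Definition~\ref{d:balanced}, with the growth condition $a_{n+1}\geq n a_1\cdots a_n$, the separation conditions, and the crucial property~\eqref{05} governed by an index function $\Phi$), quote Theorem~\ref{t:old} from \cite{B} --- every balanced set carries a continuous gauge $h$ with $0<\mathcal{H}^h(K)<\infty$ and $\mathcal{H}^h(K\cap f(K))=0$ for \emph{all} weak contractions $f$ --- and then prove only that balancedness, up to a finite set, is generic (Theorem~\ref{t:g}). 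The quantifier over weak contractions never enters the Baire category argument; it is absorbed into a deterministic theorem about a single set.

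Your plan instead puts that quantifier inside the category argument, and this is where it breaks. First, the discretization into ``countably many finite contraction patterns'' cannot work as stated: the moduli $c_\eta$ are not uniform over the family of weak contractions of $K$ (near-identity contractions have $c_\eta\to 1$), so a pattern recorded at level $n$ imposes essentially no constraint at levels $m\gg n$, which is exactly where you need $\sum_{\tau\in S_m}\mu(K_\tau)\to 0$; you name this problem but the discretization does not resolve it. Second, the conditions you intersect are universally quantified over all $g$ realizing a pattern; such conditions are not open in the scheme space in any evident way (perturbing the scheme changes which $g$ exist and which pieces they hit), and no argument is offered. Third --- the decisive structural point --- density of your ``gap-dodging'' conditions conflicts with density in $\mathcal{K}(X)$: the paper explicitly notes (remark after Definition~\ref{d:balanced}, and the parenthetical in the proof of Lemma~\ref{l:balanced}) that the separation property~\eqref{05} \emph{cannot} be imposed at every level while keeping $\mathcal{U}(\pi')$ inside an arbitrary open set $\mathcal{V}$, because an arbitrary $\mathcal{V}$ already pins down the next-level configuration. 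The paper's fix is an alternation device: property~\eqref{05} is required only at odd levels, where one retains the freedom to place points, and the index function $\Phi$ (Definitions~\ref{d:J} and~\ref{d:Phi}) guarantees every cylinder is treated at infinitely many odd levels. Your sketch has no analogue of this bookkeeping, and without it (or the specific counting $a_{n+1}\geq n a_1\cdots a_n$ that, in \cite{B}, forces the mass of the hit pieces to $0$), the phrase ``the gap pattern of such a copy is incommensurable with that of $K$'' is a restatement of the goal, not a proof. In effect your third paragraph announces that you will re-prove \cite[Thm.~5.1]{B} inside the category argument; that theorem is the hard part, and the proposal does not supply it.
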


In Section~\ref{s:prelim} we recall some notions from metric spaces which we use in this paper.
In Section~\ref{s:K} we introduce the notion of balanced compact sets. It is shown in \cite{B} that for every balanced compact set there is a continuous gauge function $h$ such that
$0<\mathcal{H}^{h}(K)<\infty$ and that $\mathcal{H}^{h} \left(K\cap f(K)\right)=0$ for every weak contraction $f\colon K\to X$. In Section~\ref{s:main} we prove that in a perfect Polish space the generic compact set is a balanced compact set, and we conclude the proof of Theorem~\ref{t:mt} and Theorem~\ref{t:gg}.

\section{Preliminaries}\label{s:prelim}

Let $(X,d)$ be a metric space, and let $A,B\subseteq X$ be arbitrary
sets. We denote by $\cl A$ and $\diam A$ the closure and the diameter of $A$, respectively.
We use the convention $\diam \emptyset = 0$. The \emph{distance} of the sets $A$ and
$B$ is defined by $\dist (A,B)=\inf \{d(x,y): x\in A, \, y\in B\}$. Let $B(x,r)=\{y\in X: d(x,y)\leq r\}$
and $U(x,r)=\{y\in X: d(x,y)< r\}$ for all $x\in X$ and $r>0$.
%%% 4.1-es defhez
More generally, consider $B(A,r)=\{x\in X: \dist (A,\{x\})\leq r\}$.

The function $h\colon [0,\infty)\to [0,\infty)$ is defined to be a \emph{gauge function} if it
is non-decreasing, right-continuous, and $h(x)=0$ iff $x=0$. For $A\subseteq X$ and $\delta>0$ consider
\begin{align*}
\mathcal{H}^{h}_{\delta}(A)&=\inf \left\{ \sum_{i=1}^\infty
h\left(\diam A_{i}\right): A \subseteq \bigcup_{i=1}^{\infty} A_{i},
\, \forall i \,
\diam A_i \leq \delta \right\}, \\
\mathcal{H}^{h}(A)&=\lim_{\delta\to 0+}\mathcal{H}^{h}_{\delta}(A).
\end{align*}
We call $\mathcal{H}^{h}$ the \emph{$h$-Hausdorff measure}. For more information on
these concepts see \cite{Ro}.

Let $X$ be a complete metric space. A set is \emph{somewhere
dense} if it is dense in a non-empty open set, otherwise it is
called \emph{nowhere dense}. We say that $M \subseteq X$ is
\emph{meager} if it is a countable union of nowhere dense sets, and
a set is \emph{co-meager} if its complement is meager.
Baire's Category Theorem implies that a set is co-meager if and only if
it contains a dense $G_{\delta}$ set. We
say that the \emph{generic} element $x \in X$ has property $\mathcal{P}$ if
$\{x \in X : x \textrm{ has property } \mathcal{P} \}$ is co-meager.
A metric space $X$ is \emph{perfect} if it has no isolated points.
A metric space $X$ is \emph{Polish} if it is complete and separable.

Given two metric spaces $(X,d_{X})$ and $(Y,d_{Y})$, a function
$f\colon X\to Y$ is called a \emph{weak contraction} if $d_{Y}(f(x_{1}),f(x_{2}))<
d_{X}(x_{1},x_{2})$ for every $x_{1},x_{2}\in X$, $x_1\neq x_2$.

Let $\mathbb{N}^{<\omega}$ stand for the set of finite sequences of natural numbers.
Let us denote the set of positive odd numbers by $2\mathbb{N}+1$.

\section{The definition of balanced compact sets} \label{s:K}

Following \cite{B} we define balanced compact sets.

\begin{definition} \label{d:J} If $a_n$ $(n\in \mathbb{N}^+)$ are positive integers
then let us consider, for every $n\in \mathbb{N}^+$,
%%%%%%
$$\mathcal{I}_{n}=\prod_{k=1}^{n}\{1,2,\dots,a_k\} \quad \textrm{and} \quad \mathcal{I}=\bigcup_{n=1}^{\infty} \mathcal{I}_{n}.$$
%%%%%
We say that a map $\Phi \colon 2\mathbb{N} +1 \to \mathcal{I}$
is an \emph{index function  according to the sequence $\langle a_n \rangle$}
if it is surjective and $\Phi(n) \in \bigcup _{k=1}^{n} \mathcal{I}_{k}$ for every odd $n$.
\end{definition}

\begin{definition} \label{d:balanced} Let $X$ be a Polish space.
A compact set $K\subseteq X$ is \emph{balanced} if it is of the form
%%%%%%%%%%%%%
\begin{equation} \label{eq:defK} K=\bigcap _{n=1}^{\infty}\left(\bigcup_{i_1=1}^{a_1}\dots \bigcup_{i_n=1}^{a_n}C_{i_1 \dots i_n} \right),
\end{equation}
%%%%%%%%%%%%%%%%%
where the $a_{n}$ are positive integers and $C_{i_1\dots i_n}\subseteq X$ are non-empty closed sets with
the following properties. There are positive reals $b_n$ and there is an
index function $\Phi \colon 2 \mathbb{N}+1 \to \mathcal{I}$ according to the sequence $\langle a_n\rangle$
such that for all $n\in \mathbb{N}^+$ and $(i_1,\dots ,i_{n}),(j_1,\dots ,j_{n})\in \mathcal{I}_{n}$

\begin{enumerate}[(i)]
\item \label{01} $a_1\geq 2$ and $a_{n+1}\geq n a_{1}\cdots a_{n}$,
%%%%%
\item \label{02} $C_{i_{1}\dots i_{n+1}}\subseteq
C_{i_1 \dots i_{n}}$,
%%%%%%
\item \label{03} $\diam C_{i_{1} \dots i_n}\leq b_n$,
%%%%%%
\item  \label{04} $\dist(C_{i_1 \dots i_n},C_{j_1\dots j_n})>2b_n$
if $(i_1,\dots ,i_n)\neq (j_1, \dots ,j_n)$.
%%%%%
\item \label{05}
If $n$ is odd, $C_{i_1 \dots i_{n}}\subseteq C_{\Phi(n)}$ and $C_{j_1 \dots j_{n}}
\nsubseteq C_{\Phi(n)}$, then for all $s,t
\in\{1,\dots ,a_{n+1}\}$, $s\neq t$, we have
$$\dist\left(C_{i_1\dots i_{n}s},C_{i_1 \dots i_{n}t}\right)> \diam \left(\bigcup
_{j_{n+1}=1}^{a_{n+1}} C_{j_1 \dots j_{n}j_{n+1}}\right).$$
\end{enumerate}
\end{definition}

\begin{remark} Property~\eqref{05} and the notion of an index function $\Phi$ are not needed for the proof of Theorem~\ref{t:gg}, only for Theorem~\ref{t:mt}.

Note that we cannot require property~\eqref{05} for every positive integer.  The proof of Lemma~\ref{l:balanced} only works if we restrict this property to odd numbers.
\end{remark}

\begin{remark} In a countable Polish space $X$ there is no balanced compact set $K\subseteq X$, since
every balanced compact set has cardinality $2^{\aleph_0}$.
\end{remark}

\section{The Main Theorem}\label{s:main}

\begin{definition} \label{d:generic}
If $X$ is a Polish space then let $(\mathcal{K}(X),d_{H})$ be the set of non-empty compact subsets of
$X$ endowed with the \emph{Hausdorff metric};
that is, for each $K_1,K_2\in \mathcal{K}(X)$,
%%%%%%%%%%%%%%%%
$$d_{H}(K_1,K_2)=\min \left\{r: K_1\subseteq B(K_2,r) \textrm{ and } K_2\subseteq B(K_1,r)\right\}.$$
%%%%%%%
It is well-known that
$(\mathcal{K}(X),d_{H})$ is a Polish space, see
e.g.~\cite{Ke}, hence we can use Baire category arguments.
Let $B_{H}(K,r)\subseteq \mathcal{K}(X)$ denote the closed ball around $K$ with radius $r$.
\end{definition}

The main goal of this paper is to prove the following theorem.

\begin{theorem}[Main Theorem] \label{t:mt}
 Let $X$ be a Polish space. The generic compact set $K\subseteq X$ is either finite or
there is a continuous gauge function $h$ such that $0<\mathcal{H}^{h}(K)<\infty$, and for every weak contraction $f\colon
K\to X$ we have $\mathcal{H}^{h} \left(K\cap f(K)\right)=0$.
\end{theorem}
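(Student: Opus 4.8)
The plan is to funnel everything into the notion of a balanced set and the result of \cite{B} quoted in Section~\ref{s:K}: every balanced compact set carries a continuous gauge $h$ with $0<\mathcal{H}^h(K)<\infty$ and satisfies $\mathcal{H}^h(K\cap f(K))=0$ for every weak contraction $f$. The first, purely formal, step is to note that both conclusions are insensitive to countable sets. Indeed, if $K=K_0\cup J$ with $K_0$ balanced and $J$ countable, and $h$ is the gauge given by \cite{B} for $K_0$, then $h(0)=0$ makes every countable set $\mathcal{H}^h$-null, so $\mathcal{H}^h(K)=\mathcal{H}^h(K_0)\in(0,\infty)$; and since $f|_{K_0}$ is again a weak contraction and $K\cap f(K)$ differs from $K_0\cap f(K_0)$ only by a countable set, we get $\mathcal{H}^h(K\cap f(K))=\mathcal{H}^h(K_0\cap f(K_0))=0$. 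Hence it suffices to show that the generic $K\in\mathcal{K}(X)$ is either finite or the union of a balanced set with a countable set.

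I would then split along the Cantor--Bendixson decomposition $X=P\sqcup C$, where $P$ is the (possibly empty) perfect kernel and $C$ is countable and open. If $P=\emptyset$ then $X$ is scattered; since every nonempty open subset of a scattered space contains a point isolated in $X$, the isolated points of $X$ are dense, and a finite subset of $X$ consisting of such points is itself an isolated point of $\mathcal{K}(X)$. These finite sets are dense in $\mathcal{K}(X)$, so the isolated points of $\mathcal{K}(X)$ are dense; being open and dense they are co-meager, and the generic $K$ is finite. If $P\neq\emptyset$, write the generic $K$ as $K^\ast\sqcup R$ with $K^\ast$ its perfect kernel (a perfect compact subset of $X$, hence $K^\ast\subseteq P$) and $R$ countable; by the first paragraph it is then enough to prove that the perfect kernel of the generic $K$ is balanced, which reduces the theorem to the following Main Lemma, the true content of the paper: in a perfect Polish space the generic compact set is balanced. (The passage from genericity in $\mathcal{K}(X)$ to genericity inside $P$, and the absorption of the countable remainder $R$, is routine but must be done with some care.)

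To prove the Main Lemma I would construct a dense $G_\delta$ set each of whose members admits a representation~\eqref{eq:defK} satisfying (i)--(v) of Definition~\ref{d:balanced}. Fix once and for all the fast-growing sequence $a_1=2$, $a_{n+1}=n a_1\cdots a_n$ (so (i) holds with equality) and a surjection $\Phi\colon 2\mathbb{N}+1\to\mathcal{I}$ with $\Phi(n)\in\bigcup_{k\le n}\mathcal{I}_k$, which exists because $\mathcal{I}$ is countable and each index becomes available at some odd stage. For each $n$ let $U_n$ consist of those $K$ that decompose into level-$n$ clusters $C_{i_1\dots i_n}$ of diameter at most $b_n\le 1/n$ for which the separations in (iv), and in (v) when $n$ is odd, hold \emph{strictly}. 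Strictness makes each $U_n$ open: a small Hausdorff perturbation of $K$ is absorbed by slightly thickening the clusters and re-intersecting with the perturbed set, the strict inequalities surviving. Density of $U_n$ is where perfectness of $X$ is used: given any compact set one first approximates it by a finite set and then replaces each point by a tiny, highly separated cluster, splitting every current piece into exactly $a_{n+1}$ nonempty sub-pieces; at an odd stage $n$ one additionally spreads apart the sub-pieces lying inside $C_{\Phi(n)}$ while keeping the sub-pieces of the remaining level-$n$ clusters tightly packed, which is precisely what (v) requires.

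The step demanding the most care -- and the one I expect to be the main obstacle -- is \emph{coherence}: a member of $\bigcap_n U_n$ carries a level-$n$ decomposition for every $n$, but these must be woven into a single nested family~\eqref{eq:defK} realizing the one fixed schedule $\Phi$. I would handle this by a fusion argument, defining the conditions at level $n+1$ relative to the clusters produced at level $n$ so that refinement is built in, and choosing each scale $b_{n+1}$ adaptively once the level-$n$ clusters are visible; the fast growth of $a_n$ costs nothing because each cluster, being an infinite compact subset of a perfect space, breaks into arbitrarily many sub-clusters at a fine enough scale, and the uniform count $a_{n+1}$ across clusters is arranged by a further perturbation. The genuinely delicate point, flagged already in the remark after Definition~\ref{d:balanced}, is that (v) can be imposed only at odd stages and must be synchronized with the surjectivity of $\Phi$, so that every index is ``favoured'' and the spreading needed by \cite{B} for the weak-contraction conclusion is realized everywhere. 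Once the nested family and the data $a_n,b_n,\Phi$ are extracted for the generic $K$, Definition~\ref{d:balanced} is met and \cite{B} delivers both conclusions; together with the reductions of the first two paragraphs this proves Theorem~\ref{t:mt}, and discarding (v) and $\Phi$ yields Theorem~\ref{t:gg}.
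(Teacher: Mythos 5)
Your high-level route is the same as the paper's: quote \cite{B} (Theorem~\ref{t:old}) for balanced sets, prove that balancedness is generic over a perfect Polish space, and split off the countable part of $X$ by Cantor--Bendixson. Your countable-absorption step and the scattered case $P=\emptyset$ are correct (and absorbing countable rather than finite sets is a harmless generalization). However, at precisely the two places you flag as delicate, the proposal lacks the ideas that make the proof work. The main one is coherence. Building ``refinement in'' gives a tree of conditions: each level-$(n+1)$ condition-set is contained in its unique parent, and the $n$-th dense open set is the union of the level-$n$ condition-sets. But for $K$ in the intersection of these dense open sets this only says that at every level $K$ lies in \emph{some} condition-set. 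The family of conditions whose sets contain $K$ is closed under passing to parents and meets every level, yet since the tree is infinitely branching it need not contain an infinite branch (K\H{o}nig's lemma is unavailable: a parent-closed subtree of $\mathbb{N}^{<\omega}$ can meet every level and have no infinite branch, e.g.\ the nodes $(k,0,\dots,0)$ of length at most $k$). Without a single branch you cannot weave the level-$n$ decompositions into one nested family~\eqref{eq:defK} with a single sequence $\langle a_n\rangle$ and a single $\Phi$. The paper's device, absent from your sketch, is Lemma~\ref{cor}: the consistent refinements are chosen so that their condition-sets are \emph{pairwise disjoint}; then $K$ lies in a unique condition at each level, and parent-closure forces these to form a branch. (Alternatively a Banach--Mazur game argument using only Lemma~\ref{l:balanced} would work, the play itself being the branch; but your dense-open-set formulation needs disjointness.) Relatedly, your fusion must proceed two levels at a time, as the paper's schemes do: the extension step that fits a condition inside an \emph{arbitrary} open $\mathcal{V}$ cannot satisfy property~\eqref{05}, so the paper fits the odd level into $\mathcal{V}$ and then builds the even level freely with~\eqref{05}; a one-level-at-a-time fusion gets stuck exactly at the odd-to-even step.

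The second gap is the non-perfect case. Your reduction ``it is enough to prove that the perfect kernel of the generic $K$ is balanced'' is false as stated when $X$ also has isolated points: finite sets of isolated points are isolated points of $\mathcal{K}(X)$, hence belong to every comeager set, and their perfect kernel is empty; the correct target is the disjunction. More substantively, genericity in $\mathcal{K}(X)$ does not formally transfer to genericity of $K\cap P$ in $\mathcal{K}(P)$, and this is not routine: it is where the paper proves that the restriction map $R(K)=K\cap X^{*}$ is \emph{open}, by constructing for given $K$ and target $C^{*}$ a compact $C$ with $C\cap X^{*}=C^{*}$ and $d_{H}(K,C)\le d_{H}(K\cap X^{*},C^{*})$, and then intersects $R^{-1}(\mathcal{F}^{*}\cup\{\emptyset\})$ with the dense open family of compact sets avoiding $\cl S\setminus S$. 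Note also that your decomposition $K=K^{*}\cup(\textrm{countable})$ does not by itself exclude that the generic $K$ is countably infinite, and such a $K$ satisfies neither alternative of the theorem (every gauge gives it measure zero); the paper rules this out because the generic $K$ has $K\cap\cl S\subseteq S$, which makes $K\setminus X^{*}$ finite, not merely countable. So the structure of your argument matches the paper's, but both of its technical cruxes -- the disjointness trick behind branch extraction, and the open restriction map behind the Cantor--Bendixson transfer -- still need to be supplied.
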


\begin{remark}
If $X$ is a Polish space and $h$ is a fixed gauge function then
it is easy to see that for the generic compact set $K\subseteq X$ we have $\mathcal{H}^{h}(K)=0$.
If $X$ is uncountable then infinite compact sets form a second category subset in $\mathcal{K}(X)$,
therefore the gauge function $h$ must depend on $K$ in the Main Theorem.
\end{remark}

The first author of the paper proved the following theorem \cite[Thm. 5.1]{B}.

\begin{theorem} \label{t:old} Let $X$ be a Polish space, and let $K\subseteq X$ be a balanced compact set.
Then there exists a continuous gauge function $h$ such that
$0<\mathcal{H}^{h}(K)<\infty$, and for every weak contraction $f\colon K\to X$ we have $\mathcal{H}^{h}
\left(K\cap f(K)\right)=0$.
\end{theorem}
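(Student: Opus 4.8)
The plan is to build the natural measure $\mu$ and a gauge $h$ directly from the combinatorial data of the balanced construction, run a mass distribution argument for the two-sided bound, and reserve property~\eqref{05} for the statement about weak contractions. Write $N_n=a_1\cdots a_n=|\mathcal I_n|$ and $N_0=1$. First I would record the elementary inequality $2b_{n+1}<b_n$: since $a_{n+1}\ge 2$ by \eqref{01}, a level-$n$ cylinder contains at least two children, which by \eqref{04} are more than $2b_{n+1}$ apart while lying in a set of diameter $\le b_n$ by \eqref{03}. Hence $b_n<b_1 2^{-(n-1)}\to 0$, the scales interleave as $2b_{n+1}<b_n<2b_n<b_{n-1}$, and the cylinders shrink to points. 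Let $\mu$ be the Borel probability measure on $K$ determined by $\mu(C_{i_1\dots i_n})=1/N_n$; this is consistent because $\sum_{i_{n+1}}1/N_{n+1}=a_{n+1}/N_{n+1}=1/N_n$, and it is atomless since $\mu(\{x\})\le 1/N_n\to 0$.

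I would then take $h$ to be the continuous non-decreasing function that equals $1/N_m$ on $[2b_{m+1},b_m]$ and interpolates linearly from $1/N_m$ up to $1/N_{m-1}$ on $[b_m,2b_m]$, with $h(0)=0$ and $h$ constant for large $r$; it is a gauge because $h(r)\to 0$ only as $r\to 0$. For the upper bound, covering $K$ by the $N_m$ level-$m$ cylinders (diameters $\le b_m$) gives $\mathcal H^h_{b_m}(K)\le N_m\,h(b_m)=1$, so $\mathcal H^h(K)\le 1$. For the lower bound I use the mass distribution principle: if $\diam U=r$ and $m$ is largest with $r\le 2b_m$, then by \eqref{04} $U$ meets at most one level-$m$ cylinder, so $\mu(U)\le 1/N_m$, while $r>2b_{m+1}$ forces $h(r)\ge 1/N_m\ge\mu(U)$. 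Hence $\mu(E)\le\mathcal H^h(E)$ for all $E$, so $\mathcal H^h(K)\ge\mu(K)=1>0$. Comparing the global upper bound with $\mu\le\mathcal H^h$ and Borel additivity of the metric outer measure $\mathcal H^h$ shows all inequalities are equalities, i.e.\ $\mathcal H^h|_K=\mu$.

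For the contraction statement it then suffices to prove $\mu(K\cap f(K))=0$. Fix a weak contraction $f\colon K\to X$; it is injective and, being continuous on a compact set, a homeomorphism onto $f(K)$. For an index $\tau$ set $B_\tau=\{x\in K: x\notin C_\tau,\ f(x)\in C_\tau\cap K\}$. The key estimate uses \eqref{05}: choose an odd $n$ with $\Phi(n)=\tau$. Each $x\in B_\tau$ lies in a level-$n$ cylinder $C_\sigma$ with $C_\sigma\nsubseteq C_\tau$, whose content has diameter $D_\sigma=\diam(\bigcup_{j_{n+1}}C_{\sigma j_{n+1}})\le b_n$, so $\diam f(C_\sigma\cap K)\le D_\sigma$ by weak contraction. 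On the other hand any two distinct level-$(n+1)$ cylinders inside $C_\tau$ are more than $D_\sigma$ apart: for children of the same inside cylinder this is exactly \eqref{05}, and for children of different inside cylinders it follows from \eqref{04}, as their parents are $>2b_n>D_\sigma$ apart. Hence $f(C_\sigma\cap K)$ meets at most one inside child, so $f(B_\tau)$ is covered by at most $N_n$ of the inside children and $\mu(f(B_\tau))\le N_n/N_{n+1}=1/a_{n+1}$. Letting $n\to\infty$ through the odd levels with $\Phi(n)=\tau$ and using \eqref{01} gives $\mu(f(B_\tau))=0$.

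Finally, if $y\in K\cap f(K)$ is not a fixed point of $f$, then $x=f^{-1}(y)\neq y$, and since the cylinders shrink to points there is a $\tau$ with $y\in C_\tau$ and $x\notin C_\tau$, whence $y\in f(B_\tau)$. Thus $(K\cap f(K))\setminus\mathrm{Fix}(f)\subseteq\bigcup_{\tau\in\mathcal I}f(B_\tau)$ is a countable union of $\mu$-null sets, while $\mathrm{Fix}(f)$ has at most one point and is $\mu$-null by atomlessness; therefore $\mu(K\cap f(K))=0=\mathcal H^h(K\cap f(K))$. The main obstacle is exactly this contraction part: a weak contraction supplies no uniform ratio, and the argument works only because property~\eqref{05}, delivered at arbitrarily large odd scales by the index function $\Phi$, turns the strict-but-non-uniform inequality into the clean combinatorial fact that an outside cylinder can reach only one vastly separated inside child. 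The points needing the most care are verifying that $\Phi$ revisits each cylinder infinitely often at odd stages (so that $1/a_{n+1}\to 0$ for every fixed $\tau$) and the bookkeeping that identifies $\mathcal H^h|_K$ with $\mu$.
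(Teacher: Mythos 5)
Your construction is on the right track, and most of it checks out: the inequality $2b_{n+1}<b_n$ (from \eqref{01}, \eqref{03}, \eqref{04}), the mass distribution $\mu(C_{i_1\dots i_n})=1/N_n$, the gauge $h$ interpolated along the scales $2b_{m+1}<b_m<2b_m<b_{m-1}$, the two-sided bound $0<\mathcal{H}^h(K)\le 1$, the identification of $\mathcal{H}^h$ with $\mu$ on Borel subsets of $K$, and the use of property \eqref{05} to conclude that $f(C_\sigma\cap K)$, having diameter at most $\diam\bigl(\bigcup_{j}C_{\sigma j}\bigr)$, meets at most one level-$(n+1)$ cylinder inside $C_{\Phi(n)}$ — all correct. (Note that the paper does not prove this theorem itself but cites it from reference [B]; your argument reconstructs the expected strategy.) One incidental error: a weak contraction need not be injective — constant maps are weak contractions — so your claim that $f$ is a homeomorphism onto $f(K)$ is false. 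It is harmless, though, since your final step only needs that any preimage $x$ of a non-fixed point $y\in K\cap f(K)$ satisfies $x\neq y$, and that the fixed point set is at most a singleton, hence $\mu$-null.

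The genuine gap is exactly the point you flagged and did not close. Definition \ref{d:J} requires only that $\Phi\colon 2\mathbb{N}+1\to\mathcal{I}$ be \emph{surjective}, so a fixed index $\tau$ may have just one odd $n$ with $\Phi(n)=\tau$; then your estimate yields only $\mu(f(B_\tau))\le 1/a_{n+1}>0$, and ``letting $n\to\infty$ through the odd levels with $\Phi(n)=\tau$'' is vacuous. The concrete $\Phi$ of Definition \ref{d:Phi} has the same defect, since $\Psi$ is merely onto; so a proof of the theorem as stated cannot assume infinite revisits. The repair is short but must be supplied: for any $m$ exceeding the level of $\tau$, one has $B_\tau\subseteq\bigcup\{B_{\tau'}:\tau'\in\mathcal{I}_m,\ C_{\tau'}\subseteq C_\tau\}$, because $f(x)\in K\cap C_\tau$ lies in a unique level-$m$ subcylinder $C_{\tau'}\subseteq C_\tau$ while $x\notin C_{\tau'}$. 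For each such $\tau'$, any odd $n'$ with $\Phi(n')=\tau'$ forces $n'\ge m$ (as $\Phi(n')\in\bigcup_{k\le n'}\mathcal{I}_k$), so \eqref{01} gives $a_{n'+1}\ge n'a_1\cdots a_{n'}\ge m\,a_1\cdots a_m$; since there are at most $a_1\cdots a_m$ such $\tau'$, your per-index bound yields $\mu(f(B_\tau))\le (a_1\cdots a_m)\cdot\frac{1}{m\,a_1\cdots a_m}=\frac{1}{m}$ for every $m$, hence $\mu(f(B_\tau))=0$. This also clarifies why \eqref{01} demands the strong growth $a_{n+1}\ge n a_1\cdots a_n$ rather than mere unboundedness: the bound must beat the number of subcylinders at the level where $\Phi$ happens to visit. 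With this insertion your proof is complete.
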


If $h$ is a gauge function then finite sets have zero $\mathcal{H}^h$ measure, so Theorem \ref{t:old} also
holds for compact sets $K\subseteq X$ that can be written as a union of a balanced compact set and a finite set.
Therefore the following theorem implies our Main Theorem.

\begin{theorem} \label{t:g} If $X$ is a Polish space then the generic
compact set $K\subseteq X$ is either finite or it can be written as the union of a balanced compact set and a finite set.
\end{theorem}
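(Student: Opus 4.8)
The plan is to run a Baire category argument directly in $(\mathcal K(X),d_H)$, producing a dense $G_\delta$ set every member of which is finite or is the union of a balanced compact set and a finite set. Since by Theorem~\ref{t:old} a balanced compact set together with a finite set carries a continuous gauge $h$ with $0<\mathcal H^h(K)<\infty$ and $\mathcal H^h(K\cap f(K))=0$ for every weak contraction, Theorem~\ref{t:g} is exactly the genericity statement that remains, and it implies the Main Theorem~\ref{t:mt}. Throughout I split $X$ by its Cantor--Bendixson decomposition $X=P\cup(X\setminus P)$, where $P$ is the perfect kernel (closed) and $X\setminus P$ is countable and open.

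The heart of the matter is a refinement step producing the scheme $\{C_{i_1\dots i_n}\}$ of Definition~\ref{d:balanced}. I encode a partial scheme of depth $n$ by a finite family of pairwise far-apart small open sets $U_{i_1\dots i_n}$, one for each address in $\mathcal I_n$, together with the already-chosen data $a_1,\dots,a_n$, $b_1,\dots,b_n$ and a finite part of an index function $\Phi$, all satisfying \eqref{01}--\eqref{05} up to level $n$; to each such partial scheme I attach the basic open set
\[
\mathcal N=\Bigl\{K\in\mathcal K(X):\ K\subseteq\bigcup_{\vec\imath\in\mathcal I_n}U_{\vec\imath}\ \text{ and }\ K\cap U_{\vec\imath}\neq\emptyset\ \text{ for every }\vec\imath\in\mathcal I_n\Bigr\}.
\]
The key geometric input is that in a perfect Polish space every nonempty open set is uncountable, so inside any $U_{\vec\imath}$ meeting $P$ one can find, for $a_{n+1}$ as large as \eqref{01} demands, that many points lying in an arbitrarily small ball, and hence $a_{n+1}$ pairwise disjoint tiny open balls $U_{\vec\imath j}$; shrinking them makes \eqref{03} and \eqref{04} hold, while at odd levels one spreads apart the children dictated by the value $\Phi(n)$ to force the extra separation~\eqref{05}. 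The $U_{\vec\imath}$ that miss $P$ lie in the countable open set $X\setminus P$, where a standard category argument shows the generic compact set is finite; these clusters are therefore frozen and will contribute only finitely many points. The resulting partial scheme of depth $n+1$ has a neighborhood $\mathcal N'$ with $\overline{\mathcal N'}\subseteq\mathcal N$, and $\Phi$ is extended at each odd step so as to list every address created so far, guaranteeing surjectivity in the limit.

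With the refinement step in hand I would form the tree of all partial schemes, set $\mathcal G=\bigcap_n\bigcup\{\mathcal N:\ \mathcal N\text{ a depth-}n\text{ neighborhood}\}$, and check that each depth-$n$ union is open and dense: openness is immediate, and density follows because, starting from any nonempty open $\mathcal V\subseteq\mathcal K(X)$, one first approximates an arbitrary member of $\mathcal V$ by a compact set distributed among the $U_{\vec\imath}$'s and then applies the refinement step $n$ times inside $\mathcal V$. For $K\in\mathcal G$ one extracts the nested sequence of neighborhoods containing it; the closed sets $C_{i_1\dots i_n}=\overline{U_{i_1\dots i_n}}$ coming from the clusters that meet $P$ then satisfy \eqref{01}--\eqref{05} and present that part of $K$ as a single balanced set in the sense of~\eqref{eq:defK}, while the frozen clusters contribute a finite set, so $K$ is finite or balanced plus finite. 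When $X$ is perfect the frozen part is empty and $K$ is balanced, which is the Corollary; when $X$ is countable the refinement never applies and the generic $K$ is finite.

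The step I expect to be the main obstacle is guaranteeing that the scheme extracted for $K\in\mathcal G$ is genuinely coherent and has the required \emph{uniform} branching: a single $K$ must sit in exactly one depth-$n$ neighborhood and in one of its children, with every address in $\mathcal I_n$ actually met and each split into exactly $a_{n+1}$ far-apart nonempty pieces. This forces me to arrange the tree so that the neighborhoods at a fixed depth are pairwise disjoint and each child has closure inside its parent, which is what makes the nested sequence unique and the limiting decomposition well defined. The second delicate point is the bookkeeping for $\Phi$ and property~\eqref{05}: since \eqref{05} can only be imposed at odd levels, the enumeration $\Phi$ must be interleaved with the construction so that every previously created address is eventually treated while \eqref{01}--\eqref{04} continue to hold, exactly as in the construction underlying Theorem~\ref{t:old}.
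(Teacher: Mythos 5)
Your treatment of the perfect case is essentially the paper's own proof: your ``partial schemes'' with their basic open sets are the paper's balanced schemes and the sets $\mathcal{U}(\pi)$, your refinement step is Lemma~\ref{l:balanced}, and the disjoint dense tree with unique-branch extraction is Lemma~\ref{cor} plus the first half of the paper's proof of Theorem~\ref{t:g}. The genuine gap is in the non-perfect case, where you run the induction in $X$ itself and ``freeze'' the clusters that miss the perfect kernel $P$. Two things go wrong. First, density forces the children of a live cluster to be small balls around the points of an essentially arbitrary approximating finite set, and inside a live cluster those points may well be (or include) isolated points of $X$; such children miss $P$ and are frozen. So a live cluster can have fewer than $a_{k+1}$ live children, or none at all, and the live tree loses the uniform product structure demanded by property~\eqref{01} of Definition~\ref{d:balanced}. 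Your assertion that ``the closed sets coming from the clusters that meet $P$ satisfy \eqref{01}--\eqref{05}'' is therefore unjustified and false in general: you cannot simultaneously keep full live branching and keep the union of children neighbourhoods dense in the parent neighbourhood. Second, frozen clusters are created at every level of a branch, and although each contributes only finitely many points, nothing in your construction makes the union over infinitely many levels finite.

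Concretely, take $X=[0,1]\cup\{1+1/n : n\geq 1\}$, so $P=[0,1]$, $S=\{1+1/n : n\ge 1\}$ is the set of isolated points, and $\cl S\setminus S=\{1\}$ lies \emph{inside the perfect kernel}. Compact sets such as (a Cantor set in $[0,1]$)$\,\cup\,\{1\}\cup\{1+1/n : n\geq 1\}$ must be approximated by your tree at every level, which forces branches that create new frozen singletons $\{1+1/n\}$ at every depth; the compact sets lying in the intersection of such a branch acquire infinitely many isolated points, and a set with infinitely many isolated points is not the union of a balanced set (which is perfect) and a finite set. The missing idea is exactly the one the paper supplies for the general case: the generic compact set avoids the closed nowhere dense set $\cl S\setminus S$ (which, as the example shows, may meet $P$, so it is invisible to your live/frozen dichotomy), and then compactness makes $K\cap\cl S\subseteq S$ finite. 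The paper implements this outside the scheme construction: it proves the perfect case on the kernel $X^{*}$ only, shows that the restriction map $R(K)=K\cap X^{*}$ is \emph{open} (a short explicit construction), so that $R^{-1}(\mathcal{F}^{*}\cup\{\emptyset\})$ is a dense $G_{\delta}$ in $\mathcal{K}(X)$, and intersects it with the dense open set $\mathcal{K}((X\setminus \cl S)\cup S)$. Without an analogue of this avoidance condition and a repair of the live tree's branching, your integrated construction does not prove the theorem for general Polish $X$.
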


%Before proving Theorem \ref{t:g} we need some definitions and a technical lemma. \marginpar{valami utosebbet technical helyett?}
To prove Theorem~\ref{t:g} first we give definitions and prove two  key lemmas.

\begin{definition} \label{d:Phi} Let us fix an onto map $\Psi \colon 2\mathbb{N}+1\to \mathbb{N}^{<\omega}$ such that $\Psi(n)$ has at most $n$ coordinates for every odd $n$.

For $n\in \mathbb{N}^{+}$ and sequence $(a_1, a_2,\ldots, a_{2n-1})$, we define the function
$$\Phi=\Phi_{a_1 a_2 \ldots a_{2n-1}} :
\{2k-1: 1\leq k\leq n\}\to \bigcup _{m=1}^{2n-1} \mathcal{I}_{m}$$ by setting
%If $a_1,a_2,\dots, a_{2n-1}$ is a finite sequence for some $n\in \mathbb{N}^{+}$ then we define the function
%$\Phi=\Phi(n,a_1,\dots,a_{2n-1}):\{2k-1: 1\leq k\leq n\}\to \bigcup _{k=1}^{2n-1} \mathcal{I}_{k}$ such that for all $k\in \{1,\dots, n\}$ we have
%%%%%%
$$\Phi(2k-1)=
\begin{cases} \Psi(2k-1) & \textrm{ if } \Psi(2k-1)\in \bigcup_{m=1}^{2k-1} \mathcal{I}_{m} \\
1\in \mathcal{I}_{1} & \textrm{ otherwise}.
\end{cases}
$$
\end{definition}

\begin{remark} \label{r:index}
If $\langle a_n \rangle_{n\in \mathbb{N}^+}$ is a sequence of positive integers then the above definition implies that the functions
$\Phi_{a_1 \dots a_{2n-1}}$ have a common extension $\Phi \colon 2\mathbb{N} +1 \to \mathcal{I}$,
and $\Phi$ is an index function according to the sequence $\langle a_n \rangle$.
\end{remark}

 Let $X$ be a Polish space.

\begin{definition}\label{def:balancedscheme}
Let $n\in \mathbb{N}^+$.
We call the pair of $(a_1, \ldots, a_{2n})$ and
%%%%%%%%%
$$\left\{\big((i_1,\ldots, i_k), \,U_{i_1\ldots i_k}\big) \,:\, (i_1, \dots, i_k ) \in \mathcal{I}_k, \ 1\le k \le  2n \right\}$$
a \emph{balanced scheme} of size $n$ if the numbers $a_k$ are positive integers, the sets $U_{i_1\ldots i_k}$ are non-empty open subsets of $X$, and there exist positive reals $b_k$ for which

\begin{enumerate}[(1)]
\item \label{x1} $a_1\geq 2$ and $a_{k}\geq (k-1)a_{1}\cdots a_{k-1}$ for all $2\le k \leq 2n$,
%%%%%
\item \label{x2} $\cl U_{i_{1}\dots i_{k}} \subseteq
U_{i_1 \dots i_{k-1}}$ for all $(i_1,\dots, i_k) \in \mathcal{I}_k$ and $2\le k\leq 2n$,
%%%%%%
\item \label{x3} $\diam U_{i_{1} \dots i_k}\leq b_k$ for all $(i_1, \dots ,i_k) \in \mathcal{I}_k$ and $1\le k\le 2n$,
%%%%%%
\item  \label{x4} $\dist(U_{i_1 \dots i_k}, \,U_{j_1\dots j_k})>2b_k$
if $(i_1,\dots, i_k)\neq (j_1, \dots ,j_k) \in \mathcal{I}_k$ and $1\le k\le 2n$.
%%%%%
\item \label{x5} Let $\Phi=\Phi_{a_1 \ldots a_{2n-1}}$. If $k<2n$ is odd, $U_{i_1 \dots i_{k}}\subseteq U_{\Phi(k)}$
and $U_{j_1 \dots j_{k}} \nsubseteq U_{\Phi(k)}$, then for all $s,t
\in\{1,\dots ,a_{k+1}\}$, $s\neq t$, we have
$$\dist\left(U_{i_1\dots i_{k}s},U_{i_1 \dots i_{k}t}\right)> \diam \left(\bigcup
_{j_{k+1}=1}^{a_{k+1}} U_{j_1 \dots j_{k}j_{k+1}}\right).$$

Let $(\emptyset, \emptyset)$ be the \emph{balanced scheme} of size $0$.
\end{enumerate}

\end{definition}

\begin{definition}\label{d:upi}
If $n\in\mathbb{N}^+$ and $\pi$ is a balanced scheme of size $n$ as in Definition~\ref{def:balancedscheme}, then we define a non-empty open subset of $\mathcal{K}(X)$,
%%%%%
\begin{equation*} \mathcal{U}(\pi) \! = \!  \left\{K\in \mathcal{K}(X)  :  K\subseteq \! \bigcup_{i_1=1}^{a_1} \!  \cdots \! \bigcup_{i_{2n}=1}^{a_{2n}}
U_{i_1 \ldots i_{2n}}, \ \forall (i_1, \ldots ,i_{2n})\in \mathcal{I}_{2n} \  K\cap U_{i_1 \ldots i_{2n}}\neq \emptyset \right\}.
\end{equation*}
For $\pi=(\emptyset, \emptyset)$ we define $\mathcal{U}(\pi)=\mathcal{K}(X)$.

Assume $n\in \mathbb{N}$, and let $\pi$ and $\pi'$ be balanced schemes of size $n$ and $n+1$, respectively. We say that $\pi'$ is \emph{consistent} with
$\pi$ if $a_k(\pi')=a_k(\pi)$ and $U_{i_1\dots i_k}(\pi')=U_{i_1\dots i_k}(\pi)$ for all $k\in \{1,\dots, 2n\}$ and $(i_1,\dots,i_k)\in \mathcal{I}_{k}$.
\end{definition}

\begin{remark} Let $\pi$ and $\pi'$ be balanced schemes of size $n$ and $n+1$, respectively.
If $\pi'$ is consistent with $\pi$ then $\mathcal{U}(\pi')\subseteq \mathcal{U}(\pi)$, and we may assume $b_k(\pi')=b_k(\pi)$ for every $k\in \{1,\dots, 2n\}$.
\end{remark}

\begin{lemma} \label{l:balanced} Assume $n\in \mathbb{N}$. Let $X$ be a non-empty perfect Polish space, let $\pi$ be a balanced scheme of size $n$,
and let $\mathcal{V}\subseteq \mathcal{U}(\pi)$ be a non-empty open subset of $\mathcal{K}(X)$. There exists a balanced scheme $\pi'$ of size $n+1$ such that $\pi'$ is consistent with $\pi$ and
$\mathcal{U}(\pi')\subseteq \mathcal{V}$. \end{lemma}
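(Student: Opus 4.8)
The plan is to build $\pi'$ by inserting, inside every level-$2n$ cell of $\pi$, two more layers of small open sets packed near a single compact set $K_0\in\mathcal V$, so tightly that each $K\in\mathcal U(\pi')$ is forced to lie in a prescribed Hausdorff ball around $K_0$. First I would fix $K_0\in\mathcal V$ and $\varepsilon>0$ with $B_H(K_0,\varepsilon)\subseteq\mathcal V$, taking $\varepsilon$ smaller than the level-$2n$ separations from \eqref{x4} so that everything constructed stays inside the cells $U_{i_1\ldots i_{2n}}$. As $K_0\in\mathcal U(\pi)$ and these cells are pairwise disjoint by \eqref{x4}, $K_0$ decomposes into the non-empty compact pieces $P_{i_1\ldots i_{2n}}=K_0\cap U_{i_1\ldots i_{2n}}$. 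Choosing $a_{2n+1},a_{2n+2}$ large enough for \eqref{x1} fixes $\phi:=\Phi_{a_1\ldots a_{2n+1}}(2n+1)$; by Remark~\ref{r:index} the associated index function extends the one used by $\pi$, so property~\eqref{x5} is automatically inherited for all odd $k<2n$. The cell $U_\phi$ is either an old cell of $\pi$ or, when $\phi\in\mathcal I_{2n+1}$, one of the level-$(2n+1)$ cells produced below.

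Next, inside each $U_{i_1\ldots i_{2n}}$ I would place $a_{2n+1}$ pairwise separated open sets of small diameter, with closures in $U_{i_1\ldots i_{2n}}$, centred within $\varepsilon/2$ of $P_{i_1\ldots i_{2n}}$ and with union $(\varepsilon/2)$-covering $P_{i_1\ldots i_{2n}}$; since $X$ is perfect, a neighbourhood of $P_{i_1\ldots i_{2n}}$ contains as many distinct points as the large count $a_{2n+1}$ requires. These are the level-$(2n+1)$ cells. Inside each of them I would similarly insert $a_{2n+2}$ tiny pairwise separated open sets---possible because a non-empty open subset of a perfect Polish space is infinite---to serve as the level-$(2n+2)$ cells. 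Fixing the diameter bounds $b_{2n+1},b_{2n+2}$ small enough then secures \eqref{x2}, \eqref{x3} and \eqref{x4}; note that the only level-$(2n+2)$ separation which is not automatic from the level-$(2n+1)$ separation is the within-parent one, handled by the choice of points and of $b_{2n+2}$.

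The delicate point, and the step I expect to be the main obstacle, is property~\eqref{x5} for the new odd index $k=2n+1$. Here I would exploit the order of construction. I would first place the level-$(2n+2)$ cells inside those level-$(2n+1)$ cells that lie in $U_\phi$, and let $\sigma>0$ be the minimum over all such cells and all $s\neq t$ of the distances $\dist(U_{i_1\ldots i_{2n+1}s},U_{i_1\ldots i_{2n+1}t})$; this minimum is positive because only finitely many cells are involved. I would then require every level-$(2n+1)$ cell \emph{not} contained in $U_\phi$ to have diameter $<\sigma$, whence the whole union of its children also has diameter $<\sigma$. Consequently the left-hand side of the inequality in \eqref{x5} is at least $\sigma$ while its right-hand side is strictly less than $\sigma$, and the inequality holds. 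Shrinking the non-contained cells is harmless, since it neither violates \eqref{x2}--\eqref{x4} nor spoils the covering arranged above.

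Finally I would verify $\mathcal U(\pi')\subseteq B_H(K_0,\varepsilon)$. For $K\in\mathcal U(\pi')$ every point of $K$ lies in some level-$(2n+2)$ cell and hence within $\varepsilon$ of $K_0$; conversely every point of $K_0$ lies within $\varepsilon/2$ of some (small) level-$(2n+1)$ cell, which contains a level-$(2n+2)$ cell that $K$ must meet, producing a point of $K$ within $\varepsilon$ of it. Thus $\mathcal U(\pi')\subseteq B_H(K_0,\varepsilon)\subseteq\mathcal V$, and since the data of $\pi'$ at levels $\le 2n$ coincides with that of $\pi$, the scheme $\pi'$ is consistent with $\pi$, as required.
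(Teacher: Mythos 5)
Your proposal is correct and takes essentially the same route as the paper: pick a target $K_0\in\mathcal{V}$, cluster the two new levels of cells around it so that every $K\in\mathcal{U}(\pi')$ is trapped in a Hausdorff ball inside $\mathcal{V}$, inherit \eqref{x5} at old odd levels from the fact that $\Phi_{a_1\ldots a_{2n+1}}$ extends $\Phi_{a_1\ldots a_{2n-1}}$, and at $k=2n+1$ handle the cells contained in $U_{\Phi(2n+1)}$ first, then force the non-contained side to be smaller than the minimum separation just obtained. The paper's implementation differs only cosmetically: it takes $K_0$ finite with exactly $N=a_{2n+1}$ points in each level-$2n$ cell, so the level-$(2n+1)$ cells are balls of radius $b_{2n+1}/2$ centred at the points of $K_0$ and the bound $d_H(K,K_0)\le b_{2n+1}$ is immediate, and instead of shrinking the non-contained cells it packs the centres of their children into a set of diameter at most $\delta/2$, where $\delta$ is the minimum distance between the already chosen centres.
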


\begin{proof}[Proof of Lemma \ref{l:balanced}]
Let $a_k(\pi')=a_k(\pi)=a_k$, $b_k(\pi')=b_k(\pi)=b_k$, $U_{i_1\dots i_k}(\pi')= U_{i_1\dots i_k}(\pi)=U_{i_1\dots i_k}$ for every $k\leq 2n$ and
$(i_1,\dots,i_k)\in \mathcal{I}_{k}$. Then $\pi'$ will satisfy properties \eqref{x1}-\eqref{x5} for
all $k\leq 2n$, since the map $\Phi_{a_1 \dots a_{2n+1}}$ extends $\Phi_{a_1 \dots a_{2n-1}}$ by Definition \ref{d:Phi}.
Therefore it is enough to construct $a_{k}(\pi')=a_{k}$, $b_{k}(\pi')=b_{k}$,
and $U_{i_1\dots i_{k}}(\pi')=U_{i_1\dots i_{k}}$ for $k\in \{2n+1,2n+2\}$ and $(i_1,\dots,i_{k})\in \mathcal{I}_{k}$.

As finite compact sets form a dense subset in $\mathcal{K}(X)$ and $X$ is perfect,
it is easy to see that there is a finite set $K_0\in \mathcal{V}$ with the following property. There is an integer
$N\geq 2$ such that $N\geq 2n(a_1\cdots a_{2n})$ and $\# (K_0\cap U_{i_1\dots i_{2n}})=N$ for every
$(i_1,\dots,i_{2n})\in \mathcal{I}_{2n}$. Set $a_{2n+1}=N$, then \eqref{x1} holds for $k=2n+1$. For $(i_1,\dots,i_{2n})\in \mathcal{I}_{2n}$ let
%%%%%%%
\begin{equation*} \label{e1}
K_0\cap U_{i_1\dots i_{2n}}=\left\{x_{i_1\dots i_{2n+1}}: 1\leq i_{2n+1}\leq a_{2n+1}\right\}.
\end{equation*}
%%%%
For $(i_1,\dots ,i_{2n+1})\in \mathcal{I}_{2n+1}$ consider the non-empty open sets
%%%%%
\begin{equation*} \label{e2}
U_{i_1\dots i_{2n+1}}=U(x_{i_1\dots i_{2n+1}},b_{2n+1}/2),
\end{equation*}
%%%%%%
where $b_{2n+1}>0$ is sufficiently small. Then the sets $U_{i_1\dots i_{2n+1}}$ satisfy properties \eqref{x2}--\eqref{x4}, and
$B_{H}(K_0,b_{2n+1})\subseteq \mathcal{V}$.
(Notice that we did not require property~\eqref{x5} to hold for even numbers, and indeed, we could not satisfy it here for an arbitrary $\mathcal{V}$.)

Let $a_{2n+2}=(2n+1)(a_1\cdots a_{2n+1})$, so \eqref{x1} holds for $k=2n+2$.
First consider those $(i_1, \ldots, i_{2n+1})$ for which $U_{i_1\dots i_{2n+1}}\subseteq U_{\Phi(2n+1)}$, where
$\Phi=\Phi_{a_1 \ldots a_{2n+1}}$. Then by the perfectness of $X$
we can fix distinct points $x_{i_1\dots i_{2n+2}}\in U_{i_1\dots i_{2n+1}}$  ($i_{2n+2}\in \{1,\dots,a_{2n+2}\}$).

Let $\delta$ be the minimum distance between the points $x_{i_1 \ldots i_{2n+2}}$ we have defined so far. Now consider those $(i_1, \ldots, i_{2n+1})$ for which $U_{i_1\dots i_{2n+1}}\nsubseteq U_{\Phi(2n+1)}$. For each of them, fix distinct points $x_{i_1\dots i_{2n+2}}\in U_{i_1\dots i_{2n+1}}$ ($i_{2n+2}\in \{1,\dots,a_{2n+2}\}$) such that
%%%%%%%%%%%%
$$ \diam \left(\bigcup_{i_{2n+2}=1}^{a_{2n+2}} \{x_{i_1\dots i_{2n+2}}\}\right)\leq \frac{\delta}{2}.$$
%%%%%%%%%%%
For $(i_1,\dots ,i_{2n+2})\in \mathcal{I}_{2n+2}$ consider the non-empty open sets
%%%%%%%
\begin{equation*} \label{e3}
U_{i_1\dots i_{2n+2}}=U(x_{i_1\dots i_{2n+2}},b_{2n+2}/2),
\end{equation*}
%%%%%%%%
where $b_{2n+2}>0$ is sufficiently small. Then the sets $U_{i_1\dots i_{2n+2}}$ satisfy properties \eqref{x2}--\eqref{x5}. Therefore $\pi'$ is a balanced scheme of size $n+1$,
and $\pi'$ is consistent with $\pi$.

Finally, we need to prove that $\mathcal{U}(\pi')\subseteq \mathcal{V}$. We show that for every $K\in \mathcal{U}(\pi')$,
%%%%
\begin{equation} \label{dH} d_{H}(K,K_0)\leq b_{2n+1}.
\end{equation}
%%%%%
Let $K\in \mathcal{U}(\pi')$. By the definition of $\mathcal{U}(\pi')$ we have $K \subseteq \bigcup_{i_1=1}^{a_1}   \cdots  \bigcup_{i_{2n+1}=1}^{a_{2n+1}}
U_{i_1 \ldots i_{2n+1}}$ and $K\cap U_{i_1\dots i_{2n+1}}\neq \emptyset$ for all $(i_1,\dots ,i_{2n+1})\in \mathcal{I}_{2n+1}$.
The set $K_0$ has the above properties by its definition, too. As $\diam  U_{i_1\dots i_{2n+1}}\leq b_{2n+1}$ for all $(i_1,\dots ,i_{2n+1})\in \mathcal{I}_{2n+1}$,
\eqref{dH} follows. Equation \eqref{dH} implies $\mathcal{U}(\pi')\subseteq  B_{H}(K_0,b_{2n+1})$,
therefore $B_{H}(K_0,b_{2n+1})\subseteq \mathcal{V}$ yields $\mathcal{U}(\pi')\subseteq \mathcal{V}$.
\end{proof}

\begin{lemma} \label{cor} Assume $n\in \mathbb{N}$.
Let $X$ be a non-empty perfect Polish space, and let $\pi$ be a balanced scheme of size $n$. Then there are balanced schemes $\pi_j$ $(j\in \mathbb{N})$ of size $n+1$
such that each $\pi_j$ is consistent with $\pi$, the sets $\mathcal{U}(\pi_j)$ $(j\in \mathbb{N})$ are pairwise disjoint,
and $\bigcup_{j=0}^{\infty} \mathcal{U}(\pi_j)$ is dense in $\mathcal{U}(\pi)$.
\end{lemma}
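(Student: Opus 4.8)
The plan is to build the schemes $\pi_j$ one at a time by a recursive exhaustion of $\mathcal{U}(\pi)$, using Lemma~\ref{l:balanced} to realize each piece, while arranging three things simultaneously: the chosen regions have pairwise disjoint closures (giving disjointness), the construction always leaves room and hence never terminates (giving infinitely many schemes, indexed by all of $\mathbb{N}$), and every member of a fixed countable base is eventually met (giving density). Since $\mathcal{K}(X)$ is Polish and $\mathcal{U}(\pi)$ is a non-empty open subset of it, $\mathcal{U}(\pi)$ is separable; I would fix a countable base $\{W_m : m\in\mathbb{N}\}$ of non-empty open subsets of $\mathcal{U}(\pi)$. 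First I would record that $\mathcal{K}(X)$ is perfect: given $K\in\mathcal{K}(X)$ and $\varepsilon>0$, perfectness of $X$ lets us adjoin to $K$ a new point within $\varepsilon$ of an existing one, producing compact sets arbitrarily $d_H$-close to but distinct from $K$. In particular every non-empty open subset of $\mathcal{K}(X)$ contains two points, hence two disjoint non-empty open subsets.

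The next step is a "room-leaving" refinement of Lemma~\ref{l:balanced}: for any non-empty open $\mathcal{V}\subseteq\mathcal{U}(\pi)$ there is a balanced scheme $\sigma$ of size $n+1$ consistent with $\pi$ such that $\cl\mathcal{U}(\sigma)\subseteq\mathcal{V}$ and $\mathcal{V}\setminus\cl\mathcal{U}(\sigma)\neq\emptyset$. To see this, use perfectness to split $\mathcal{V}$ into two disjoint non-empty open sets, shrink the first to some $\mathcal{V}'$ with $\cl\mathcal{V}'\subseteq\mathcal{V}$, apply Lemma~\ref{l:balanced} to $\mathcal{V}'$ to obtain $\sigma$ with $\mathcal{U}(\sigma)\subseteq\mathcal{V}'$, and keep the second open set as witnessed leftover. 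Throughout the construction I maintain the open set $R_j=\mathcal{U}(\pi)\setminus\bigcup_{i<j}\cl\mathcal{U}(\pi_i)$, starting from $R_0=\mathcal{U}(\pi)$. At each stage the new region will satisfy $\cl\mathcal{U}(\pi_j)\subseteq R_j$, so that $\mathcal{U}(\pi_j)$ is disjoint from all earlier $\mathcal{U}(\pi_i)$ and $R_{j+1}\neq\emptyset$; consistency with $\pi$ comes for free from Lemma~\ref{l:balanced}.

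For the bookkeeping, fix a surjection $k\colon\mathbb{N}\to\mathbb{N}$ and, at stage $j$, try to serve the base element $W_{k(j)}$. If $W_{k(j)}\cap R_j\neq\emptyset$, apply the room-leaving refinement inside $\mathcal{V}=W_{k(j)}\cap R_j$ to get $\pi_j$ with $\mathcal{U}(\pi_j)\subseteq W_{k(j)}$; otherwise take any non-empty open $\mathcal{V}\subseteq R_j$ and produce a filler $\pi_j$ the same way. Either way $R_{j+1}$ is non-empty, so the process yields schemes $\pi_j$ for every $j\in\mathbb{N}$, pairwise disjoint by construction. The hard part is the density of $\bigcup_j\mathcal{U}(\pi_j)$, and the crux is exactly the case $W_{k(j)}\cap R_j=\emptyset$: this means $W_{k(j)}\subseteq\bigcup_{i<j}\cl\mathcal{U}(\pi_i)$, and since each boundary $\partial\mathcal{U}(\pi_i)$ is nowhere dense, Baire's theorem forbids the non-empty open set $W_{k(j)}$ from lying in the finite union $\bigcup_{i<j}\partial\mathcal{U}(\pi_i)$; hence $W_{k(j)}$ already meets one of the open sets $\mathcal{U}(\pi_i)$. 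Thus after the stage assigned to each $m$, the base element $W_m$ meets $\bigcup_j\mathcal{U}(\pi_j)$, which yields density. Relabelling the resulting sequence as $\pi_j$ $(j\in\mathbb{N})$ completes the proof.
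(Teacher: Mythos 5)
Your proof is correct in its overall structure, but it takes a genuinely different route from the paper's. The paper starts by fixing countably many pairwise disjoint non-empty open sets $\mathcal{U}_i \subseteq \mathcal{U}(\pi)$ whose union is dense in $\mathcal{U}(\pi)$ (this is asserted, not proved), merges countable bases of the $\mathcal{U}_i$ into a single list $\{\mathcal{V}_n\}$, and runs a greedy recursion: at stage $j$ it picks the least-indexed $\mathcal{V}_n$ disjoint from $\bigcup_{k<j}\mathcal{U}(\pi_k)$ and places $\mathcal{U}(\pi_j)$ inside it via Lemma~\ref{l:balanced}. Disjointness of the family $\{\mathcal{U}_i\}$ guarantees by a pigeonhole count that such an $n$ always exists, disjointness of the output is automatic from the choice, and density follows from the minimality of the greedy index. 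You achieve the same three requirements by different means: never getting stuck comes from your ``room-leaving'' strengthening of Lemma~\ref{l:balanced} (put $\cl \mathcal{U}(\sigma)$ inside $\mathcal{V}$ while leaving a non-empty open leftover), disjointness comes from keeping closures inside the shrinking region $R_j$, and density comes from the observation that if a basic set $W_m$ misses $R_j$ then $W_m \subseteq \bigcup_{i<j}\cl\mathcal{U}(\pi_i)$, and since the finitely many boundaries $\partial\mathcal{U}(\pi_i)$ are nowhere dense, $W_m$ must already meet some $\mathcal{U}(\pi_i)$. (Invoking Baire here is overkill: a finite union of nowhere dense sets is nowhere dense.) Your version trades the paper's unproved decomposition of $\mathcal{U}(\pi)$ into a disjoint dense family for an explicit appeal to perfectness of $\mathcal{K}(X)$; these rest on essentially the same underlying fact, but your write-up makes the dependence visible, and you even get the slightly stronger conclusion that the closures $\cl\mathcal{U}(\pi_j)$ are pairwise disjoint.

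One detail needs repair: your justification that $\mathcal{K}(X)$ is perfect --- ``adjoin to $K$ a new point within $\varepsilon$ of an existing one'' --- fails when $K$ is clopen in $X$, for instance when $X$ is compact and perfect and $K=X$, since then there is no new point to adjoin. The fact itself is true, and the fix is easy: either allow deleting a small punctured neighbourhood of a point of $K$ in that case, or (cleaner, and all you actually need) first pass to a finite set $K_0$ in the given open subset of $\mathcal{K}(X)$ --- finite sets are dense in $\mathcal{K}(X)$, as the paper also uses in the proof of Lemma~\ref{l:balanced} --- and note that since $X$ is perfect every neighbourhood of a point of $K_0$ is infinite, so there is $y\notin K_0$ arbitrarily close to $K_0$, making $K_0\cup\{y\}$ a second, distinct element nearby; two distinct compact sets then yield the two disjoint non-empty open subsets your room-leaving lemma requires. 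With that patched, your argument is complete.
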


\begin{proof}%[Proof of Lemma \ref{cor}]
Let $\mathcal{U}_{i}\subseteq \mathcal{U}(\pi)$ $(i\in \mathbb{N})$ be non-empty disjoint open sets such that $\bigcup_{i=0}^{\infty} \mathcal{U}_i$ is dense in $\mathcal{U}(\pi)$. For all $i\in \mathbb{N}$ let
$\mathcal{B}_i$ be a countable basis of $\mathcal{U}_i$, and let $\mathcal{B}=\bigcup_{i=0}^{\infty} \mathcal{B}_{i}$.
We may assume $\emptyset\notin \mathcal{B}$ and let us consider an enumeration $\mathcal{B}=\{\mathcal{V}_n: n\in \mathbb{N}\}$.
Let $j\in \mathbb{N}$ and assume that $\pi_k$ and $n(k)\in \mathbb{N}$ ($k<j$) are already defined such that $\mathcal{U}(\pi_k)\subseteq \mathcal{V}_{n(k)}$ for $k<j$. Consider
%%%%%
$$n(j)=\min \left\{n\in \mathbb{N}: \mathcal{V}_n\cap \left(\cup_{k<j}  \mathcal{U}(\pi_{k})\right)=\emptyset\right\}.$$
%%%%%%%%%%
The definition of $\mathcal{B}$ and the induction hypothesis easily imply that $\bigcup_{k<j}  \mathcal{U}(\pi_{k})$ can intersect at most $j$ open sets $\mathcal{U}_i$,
so $n(j)<\infty$ exists. Lemma~\ref{l:balanced} implies that there is a balanced scheme $\pi_j$ of size $n+1$
such that $\pi_j$ is consistent with $\pi$ and $\mathcal{U}(\pi_j)\subseteq \mathcal{V}_{n(j)}$.

The construction yields that $\bigcup_{j=0}^{\infty} \mathcal{U}(\pi_j)$ intersects each $\mathcal V_i$, thus it is dense in each $\mathcal{U}_i$,
therefore it is dense in $\mathcal{U}(\pi)$, and the union is clearly a disjoint union.
\end{proof}

Now we are ready to prove Theorem \ref{t:g} that implies our Main Theorem.

\begin{proof}[Proof of Theorem \ref{t:g}] First assume that $X$ is perfect,
we prove that the generic compact set $K\subseteq X$ is balanced.
%If $X=\emptyset$ then the statement is obvious, so
We may assume that $X\neq \emptyset$.
Let $\mathcal{G}_0=\mathcal{K}(X)$. Lemma~\ref{cor} implies that
there are balanced schemes $\pi_j$ $(j\in \mathbb{N}$) of size $1$ such that the disjoint union
%%%%
$$\mathcal{G}_{1}=\bigcup_{j_1=0}^{\infty} \mathcal{U}(\pi_{j_1})$$
%%%%%
is a dense open set in $\mathcal{K}(X)$. Assume by induction that the balanced schemes $\pi_{j_1\dots j_n}$ of size $n$ and the dense open set
$\mathcal{G}_{n}$ are already defined. Lemma~\ref{cor} implies that for every $j_1,\dots,j_{n}\in \mathbb{N}$ there exist balanced schemes
$\pi_{j_1\dots j_{n+1}}$ $(j_{n+1}\in \mathbb{N})$ of size $n+1$ such that $\pi_{j_1\dots j_{n+1}}$ is consistent with $\pi_{j_1\dots j_{n}}$ and the disjoint union
$\bigcup_{j_{n+1}=0}^{\infty} \mathcal{U}(\pi_{j_1\dots j_{n+1}})$ is dense in
$\mathcal{U}(\pi_{j_1\dots j_{n}})$.
Then the disjoint union
%%%%
$$\mathcal{G}_{n+1}=\bigcup_{j_1=0}^{\infty} \cdots \! \bigcup_{j_{n+1}=0}^{\infty} \mathcal{U}(\pi_{j_1 \dots j_{n+1}})$$
%%%%%
is dense in $\mathcal{G}_{n}$, and the induction hypothesis yields that $\mathcal{G}_{n+1}$ is a dense open set in $\mathcal{K}(X)$.
Consider
%%%%%%
$$\mathcal{G}=\bigcap_{n=0}^{\infty} \mathcal{G}_{n}.$$
%%%%%%%
As a countable intersection of dense open sets $\mathcal{G}$ is co-meager in $\mathcal{K}(X)$. Let $K\in \mathcal{G}$ be arbitrary fixed,
it is enough to prove that $K$ is balanced.
Since the $n$th level open sets $\mathcal{U}(\pi_{j_1\dots j_n})$ are pairwise disjoint, there is a (unique) sequence
$\langle j_{n} \rangle_{n\in \mathbb{N}^+}$ such that $K\in \mathcal{U}(\pi_{j_1\dots j_n})$ for all $n\in \mathbb{N}^+$. As the balanced scheme
$\pi_{j_1\dots j_{n+1}}$ is consistent with $\pi_{j_1\dots j_{n}}$ for every $n\in \mathbb{N}^+$,
there are positive integers $a_n$ and non-empty open sets
$U_{i_1\dots i_n}$ witnessing this fact.
By Remark~\ref{r:index},
the functions
$\Phi_{a_1 a_2 \ldots a_{2n-1}}$
%$\Phi(n,a_1,\dots a_{2n-1})$
have a common extension $\Phi \colon 2\mathbb{N}+1 \to \mathcal{I}$,
and $\Phi$ is an index function according to the sequence $\langle a_n \rangle$.
For
$n\in \mathbb{N}^+$ and $(i_1,\dots,i_n)\in \mathcal{I}_{n}$
let us define
%%%%
$$C_{i_1\dots i_n}=\cl U_{i_1\dots i_n}.$$
%%%%%%
Since $K\in \mathcal{U}(\pi_{j_1\dots j_n})$ for every $n$, Definition~\ref{d:upi} implies that
%Applying the definitions of the balanced scheme of size $n$ for all $n\in \mathbb{N}^+$ we obtain that
%%%%%%
$$K=\bigcap _{n=1}^{\infty}\left(\bigcup_{i_1=1}^{a_1}\cdots \bigcup_{i_n=1}^{a_n}C_{i_1 \dots i_n} \right).$$
%%%%%
From Definition~\ref{def:balancedscheme} it follows that the positive integers $a_n$ and the non-empty closed sets $C_{i_1 \dots i_n}$ satisfy properties $\eqref{01}$--$\eqref{05}$
of Definition~\ref{d:balanced}.
Therefore $K$ is balanced.

Now let $X$ be an arbitrary non-empty Polish space.
Then there is a perfect set $X^{*}\subseteq X$ such that $U=X\setminus X^{*}$ is countable open, see \cite[(6.4) Thm.]{Ke}.
Let $S$ be the set of isolated points of $X$. Then $S$ is open, and $S\subseteq U$. We claim that $S$ is dense in $U$, thus $U\subseteq \cl S$.
Indeed, assume to the contrary that there is a non-empty open set $V\subseteq U$ such that $V\cap S=\emptyset$.
By shrinking $V$, we may suppose that $\cl V \subseteq U$. Then $\cl V\subseteq U$ is a non-empty perfect set, so it has
cardinality $2^{\aleph_0}$ by \cite[(6.3)~Cor.]{Ke}, which is a contradiction.

For a set $A\subseteq X$ let us denote by $\mathcal{K}(A)$ the metric space of non-empty compact subsets of $A$, similarly as in Definition \ref{d:generic}.

Since $S$ is open, compact non-empty subsets of $S$ form a dense open subset of $\mathcal{K}(\cl S)$. As $S$ is the set of isolated points, every compact subset of $S$ is finite.

The first part of the proof implies that there is a dense $G_{\delta}$ set $\mathcal{F}^{*}\subseteq \mathcal{K}(X^{*})$ such that every $K^{*}\in \mathcal{F}^{*}$ is balanced.

Let $\mathcal{F}\subseteq \mathcal{K}(X)$ be the the set of those non-empty compact subsets $K\subseteq X$ for which $K\cap \cl S\subseteq S$ and $K\cap X^* \in \mathcal{F}^* \cup \{\emptyset\}$.
Clearly, every $K\in\mathcal{F}$ is a union of $\emptyset$ or a balanced compact set in $X^*$ and finitely many points in $S$.
We claim that $\mathcal{F}$ is a dense $G_\delta$ subset of $\mathcal{K}(X)$. Let us define the continuous map
%%%%%%%
$$R\colon \mathcal{K}(X) \to \mathcal{K}(X^{*}) \cup\{\emptyset\}, \quad R(K)=K\cap X^{*},$$
%%%%%%
where the distance of $\emptyset$ to points of $\mathcal{K}(X^{*})$ is defined to be $1$.

We show that the map $R$ is open. Let $K\in \mathcal{K}(X)$ and $C^{*}\in \mathcal{K}(X^{*}) \cup\{\emptyset\}$ be arbitrary, and set $K^{*}=K\cap X^{*}$.
It is enough to construct $C\in \mathcal{K}(X)$ such that $C\cap X^{*}=C^{*}$ and $d_{H}(K,C)\leq d_{H}(K^{*},C^{*})$.
If $K\subseteq X^{*}$ or $K^{*}=C^{*}$, then $C=C^{*}$ or $C=K$ works, respectively. Thus we may assume that $K\setminus X^{*}\neq \emptyset$ and $d_{H}(K^{*},C^{*})>0$.
The compactness of $K$ implies that there are finitely many
open sets $V_i$ such that $K\setminus X^{*}\subseteq \bigcup_{i=1}^{m} V_i$, $V_i\cap (K\setminus X^{*})\neq \emptyset$, and $\diam V_i \leq d_{H}(K^{*},C^{*})$ for all
$i\in \{1,\dots,m\}$. Let us choose $x_i\in V_i\setminus X^{*}$ for all $i\in \{1,\dots, m\}$ arbitrarily, and consider $C=C^{*}\cup \bigcup_{i=1}^{m} \{x_i\}$. It is easy to see that
$C\in \mathcal{K}(X)$ fulfills the required properties.

Since $R$ is open, $R^{-1}(\mathcal{F}^*\cup\{\emptyset\})$ is dense $G_\delta$ in $\mathcal{K}(X)$. We clearly have
%%%%%%
$$\mathcal{F}=R^{-1}(\mathcal{F}^*\cup\{\emptyset\}) \cap \mathcal{K}((X\setminus \cl S)\cup S).$$
%%%%%%%
As $(X\setminus \cl S)\cup S$ is dense open in $X$, $\mathcal{K}((X\setminus \cl S)\cup S)$ is dense open in $\mathcal{K}(X)$. Thus $\mathcal{F}$ is dense $G_\delta$ in $\mathcal{K}(X)$,
which concludes the proof.
\end{proof}

\noindent \textbf{Acknowledgement.} The authors are indebted to
M.~Elekes and to an anonymous
referee for their valuable comments. \noindent


\begin{thebibliography}{99}

\bibitem{B} R.~Balka, Metric spaces admitting only trivial weak contractions, arXiv:1202.1539,
to appear in \textit{Fund. Math.}

\bibitem{C} C.~Cabrelli, U.~B.~Darji, U.~M.~Molter, Visible and invisible Cantor sets, 
\textit{Excursions in Harmonic Analysis, Volume 2: The February Fourier Talks at the Norbert Wiener Center}, edited by 
Travis D. Andrews, Radu Balan, John J. Benedetto, Wojciech Czaja and Kasso A. Okoudjou, 11--22, Springer, 2013.

\bibitem{D} R.~O.~Davies, Sets which are null or non-sigma-finite for every translation-invariant measure,
\textit{Mathematika} \textbf{18} (1971), 161--162.

\bibitem{E} M.~Elekes, On a converse to Banach's Fixed Point
Theorem, \textit{Proc. Amer. Math. Soc.} \textbf{137} (2009), no. 9,
3139--3146.

\bibitem{Ke} A.~S.~Kechris, \textit{Classical descriptive set theory}, Springer-Verlag, 1995.

\bibitem{Ro} C.~A.~Rogers, \textit{Hausdorff measures},
Cambridge University Press, 1970.

\end{thebibliography}
\end{document}